\newtheorem{theorem}{Theorem} %[section]
\newtheorem*{theorem*}{Theorem}
\newtheorem{proposition}[theorem]{Proposition}
\theoremstyle{remark}
\newtheorem{rmk}[theorem]{Remark}
\newcommand{\rr}{\mathbb{R}}
\newcommand{\zz}{\mathbb{Z}}
\newcommand{\nn}{\mathbb{N}}
\newcommand{\ep}{\hfill \ensuremath{\Box}}
\title{Limits of Multilevel TASEP and similar processes}
\author{Vadim Gorin\thanks{Institute for Information Transmission Problems of Russian Academy of Sciences,
e-mail: vadicgor@gmail.com} \and Mykhaylo Shkolnikov\thanks{INTECH Investment Management, e-mail: mshkolni@gmail.com}}
\begin{document}

\maketitle

\begin{abstract}
 We study the asymptotic behavior of a class of stochastic dynamics on interlacing particle configurations (also known as Gelfand-Tsetlin patterns).    Examples of such dynamics include, in particular, a multi-layer extension of TASEP and particle dynamics related to the shuffling algorithm for
 domino tilings of the Aztec diamond. We prove that the process of reflected interlacing Brownian motions introduced by Warren in \cite{W} serves as a  universal scaling limit for such dynamics.
\end{abstract}

\section{Introduction}
Consider $N(N+1)/2$ interlacing particles with integer coordinates $x_i^j$, $j=1,\dots, N$,
$i=1,\dots,j$ satisfying the inequalities
\begin{equation}\label{int_ineq}
 x_{i-1}^j < x_{i-1}^{j-1} \le x_{i}^{j}.
\end{equation}
and denote by $\mathbb {GT}^{(N)}$ the set of all vectors in ${\mathbb Z}^{N(N+1)/2}$, which
satisfy \eqref{int_ineq}. An element of $\mathbb{GT}^{(5)}$ is shown in Figure
\ref{Figure_Interlacing}. Elements of $\mathbb {GT}^{(N)}$ are often called Gelfand--Tsetlin
patterns of size $N$ and under this name are widely used in representation-theoretic context.
Whenever it does not lead to confusion, we use the notation $x_i^j$ both for the location of a
particle and the particle itself.
\begin{figure}[h]
\begin{center}
\noindent{\scalebox{1.3}{\includegraphics{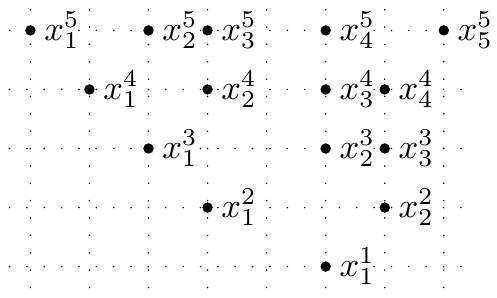}}} \caption{Interlacing particles.
\label{Figure_Interlacing} }
\end{center}
\end{figure}

\smallskip

In this article we study a class of Markov chains on $\mathbb{GT}^{(N)}$ with simple block/push
interaction between particles. Let us start with an example. Let $Y(t)$, $t\geq0$ be a
continuous-time Markov chain taking values in $\mathbb {GT}^{(N)}$ and defined as follows. Each of
the $N(N+1)/2$ particles has an independent exponential clock of rate $1$ (in other words, the times
when clocks of particles ring are independent standard Poisson processes). If the clock of the particle
$Y_i^j$ rings at time $t$, we check whether $Y_{i}^{j-1}(t-)=Y_{i}^j(t-)+1$. If so, then nothing
happens; otherwise we let $Y_i^j(t)=Y_i^j(t-)+1$. If
$Y_i^j(t-)=Y_{i+1}^{j+1}(t-)=\dots=Y_{i+k}^{j+k}(t-)$, then we also set
$Y_{i+1}^{j+1}(t)=Y_{i+1}^{j+1}(t-)+1,\dots,Y_{i+k}^{j+k}(t)=Y_{i+k}^{j+k}(t-)+1$.

\smallskip

Informally, one can think of each particle having  a weight depending on its vertical coordinate,
with higher particles being lighter. The dynamics is defined in such a way that heavier particles
push lighter ones and lighter ones are blocked by heavier ones in order to preserve the
interlacement conditions.

\smallskip

The Markov chain $Y$ was introduced in \cite{BF} as an example of a 2d growth model relating
classical interacting particle systems and random surfaces arising from dimers. The restriction of
$Y$ to the $N$ leftmost particles $x_1^1,\dots,x_1^N$ is the familiar totally asymmetric exclusion
process (TASEP), while the particle configuration $Y(t)$ at a fixed time $t$ can be identified
with a lozenge tiling of a sector in the plane and with a stepped surface (see the introduction in
\cite{BF} for the details).

\smallskip

More generally, we can start from any $N(N+1)/2$ integer-valued random processes (in discrete or
continuous time) with unit jumps of particles and construct a process similar to $Y$ on
$\mathbb{GT}^{(N)}$ through the same block/push interactions between particles (see Section
\ref{Section_definitions} for a formal definition). We will refer to this process as the
\emph{interlacing dynamics} driven by the original $N(N+1)/2$ random processes. Some of such
processes were studied in \cite{BF}, \cite{WW} and \cite{N}. In particular, in the last article it
is shown that interlacing dynamics driven by $N(N+1)/2$ independent simple random walks is closely
related to the shuffling algorithm for the domino tilings of the Aztec diamond studied in
\cite{EKLP}.

\smallskip

In the present article we investigate the convergence to the scaling limits of the process $Y$ and
more general interlacing dynamics. The asymptotics of the \emph{fixed time} distributions of such
processes as $t\to\infty$ is well studied. Most results are easier to prove if the process is
started from the \emph{densely packed} initial conditions: $Y_i^j(0)=1-i$, $j=1,\dots, N$,
$i=1,\dots, j$. The reason is that such processes often have \emph{determinantal} correlation
functions (see \cite{BF}). However, many of the results hold in much greater generality. If one
takes the limit $t\to\infty$ for a fixed $N\in\nn$, then the distribution of $Y(t)$ converges to
the so-called ``GUE-minors'' process, namely the joint distribution of the eigenvalues of an
$N\times N$ Gaussian Hermitian matrix and the eigenvalues of its top-left corners (see \cite{Bar},
\cite{JN}, \cite{OR}, \cite{N}).

\smallskip

If $N$ goes to infinity together with $t$, then  locally one unveils the Tracy-Widom distribution,
the Airy point process and discrete point processes governed by the sine kernel in the limit (see
\cite{BK}, \cite{J1}, \cite{J2}), whereas the global fluctuations in this model are described by
the Gaussian Free Field (see \cite{BF}). In the physics terminology the model fits into the
anisotropic KPZ universality class (see the introduction in \cite{BF} and the references therein
for more details).

\smallskip

We perform the next step and study the joint distributions of $Y$ and similar processes at
\emph{various} times. Currently we restrict ourselves to the case of fixed $N$. The limit process
$W_i^j$, $j=1,\dots,N$, $i=1,\dots,j$ was introduced in \cite{W} and can be constructed
inductively as follows. $W_1^1$ is standard Brownian motion; given $W_i^j$ with $j<k$ the
coordinate $W_i^k$ is defined as a Brownian motion \emph{reflected} by the trajectories
$W_{i-1}^{k-1}$ (if $i>1$) and $W_{i}^{k-1}$ (if $i<k$) (see \cite{W} or Section
\ref{Section_definitions} for a more detailed definition). The process $W$ has many interesting
properties: for instance, its projection $W_i^N$, $i=1,\dots,N$ is an $N$-dimensional Dyson's
Brownian motion, namely the process of $N$ independent Brownian motions conditioned not to collide
with each other by means of Doob's $h$-transform.

Our main result is the following theorem. Let $D([0,\infty),\mathbb R^{N(N+1)/2})$ stand for the
space of right-continuous functions from $[0,\infty)$ to $\mathbb R^{N(N+1)/2}$ having left
limits, endowed with the topology of uniform convergence on compact sets.

\begin{theorem}
\label{theorem_main} Let $X(\cdot;n)$, $n=1,2,\dots$ be a sequence of $\mathbb
Z^{N(N+1)/2}$-valued random processes with unit steps in each coordinate and let $\mathcal
X(\cdot;n)$, $n=1,2,\dots$ be the interlacing dynamics driven by $X(\cdot;n)$, $n\in\nn$,
respectively, with some initial conditions $\mathcal X(0;n)\in\mathbb {GT}^{(N)}$, $n\in\nn$,
which are independent of the increments of $X(\cdot;n)$, $n\in\nn$. Suppose that there exists
a sequence $\{a_n(\cdot)\}_{n\in\nn}$ of continuous real-valued functions on
$[0,\infty)$ and a sequence $\{b_n\}_{n\in\nn}$ of positive reals such that,
as $n\to\infty$, one has $b_n\to\infty$, the law of $(X(\cdot;n)-a_n(\cdot))/b_n$ on
$D([0,\infty),\mathbb R^{N(N+1)/2})$ converges to the law of a standard $N(N+1)/2$ dimensional
Brownian motion, and the law of $(\mathcal X(0;n)-a_n(0))/b_n$ converges to some law $W(0)$.

Then the law of $(\mathcal X(\cdot;n)-a_n(\cdot))/b_n$ on $D([0,\infty),\mathbb R^{N(N+1)/2})$ converges
to the law of the process $W$ with initial condition $W(0)$ as $n\to\infty$.
\end{theorem}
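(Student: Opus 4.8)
The plan is to exhibit both the discrete interlacing dynamics $\mathcal X(\cdot;n)$ and Warren's process $W$ as the images, under one and the same deterministic multilevel reflection map, of their driving noise together with the initial condition, and then to obtain the convergence from the convergence of the driving data by a continuous-mapping argument. I would carry this out by induction on the level $k=1,\dots,N$. At stage $k$ I would prove that $\big((\mathcal X_i^j(\cdot;n)-a_n(\cdot))/b_n\big)_{j\le k}$ converges in $D([0,\infty),\mathbb R^{k(k+1)/2})$ to $(W_i^j)_{j\le k}$ jointly with all the centred and scaled driving processes, that the limit $(W_i^j)_{j\le k}$ is an a.s.\ continuous functional $G_k$ of the limiting Brownian data, and that the scaled pre-limit levels $\le k$ differ from $G_k$ applied to the pre-limit data by an error that is uniformly small on compact time intervals. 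The base case $k=1$ is immediate: the bottom particle is never blocked or pushed, so $\mathcal X_1^1(\cdot;n)=X_1^1(\cdot;n)$, $G_1$ is the identity, and the statement is exactly one of the hypotheses.

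For the inductive step, observe first that, since $\mathcal X(0;n)$ is independent of the increments of $X(\cdot;n)$, the assumed marginal convergences combine into joint convergence of $\big((\mathcal X(0;n)-a_n(0))/b_n,\,(X(\cdot;n)-a_n(\cdot))/b_n\big)$ to $(W(0),B)$, with $W(0)$ independent of the standard $N(N+1)/2$-dimensional Brownian motion $B$. Using the inductive description of levels $\le k-1$ through $G_{k-1}$ and the continuous mapping theorem, one obtains joint convergence of the scaled levels $\le k-1$ and of the centred, scaled driving processes of the level-$k$ particles, the latter converging to the independent Brownian motions $\beta_i^k$, $i=1,\dots,k$. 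Now each level-$k$ particle $\mathcal X_i^k$ arises from its driving process $X_i^k$ by a \emph{two-sided} reflection: it is pushed up from below by $\mathcal X_{i-1}^{k-1}$ when $i>1$ and blocked from above by $\mathcal X_i^{k-1}$ when $i<k$; the chained pushing built into the discrete rule is already encoded in the level-$(k-1)$ trajectories, so no extra mechanism is needed. In the limit this is exactly the reflection that defines $W_i^k$ in Warren's construction, namely the solution of the Skorokhod problem for a Brownian motion confined to the random time-dependent interval $[W_{i-1}^{k-1}(t),W_i^{k-1}(t)]$.

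It remains to pass the reflection to the limit. I would first use Skorokhod's representation theorem to realise the sequences on a common probability space with almost sure uniform-on-compacts convergence, and then invoke a stability property of the Skorokhod map: if input paths and ordered barrier paths converge uniformly on compact sets and the limiting barriers are continuous, the reflected paths converge uniformly on compact sets to the reflection of the limiting data. Combined with the elementary estimate that, for fixed $N$, the discrete block/push reflection of a unit-step walk off given barriers differs from the genuine continuum Skorokhod reflection of the interpolated barriers by an $O(1)$ error in the original coordinates --- hence $o(1)$ after division by $b_n\to\infty$ --- this upgrades the convergence from level $k-1$ to level $k$ and simultaneously produces the functional $G_k$. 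Uniqueness of the solution of the time-dependent two-sided Skorokhod problem for continuous barriers $\ell\le u$ identifies the limit with the output of Warren's construction, which completes the induction and gives convergence of $(\mathcal X(\cdot;n)-a_n(\cdot))/b_n$ to $W$ started from $W(0)$.

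The step I expect to be the main obstacle is the analysis of the two-sided reflection, precisely because the barriers $W_{i-1}^{k-1}$ and $W_i^{k-1}$ are consecutive particles on level $k-1$ and are not separated a priori: they may coincide and squeeze $W_i^k$. One therefore has to establish both the continuity of the Skorokhod map and the $O(1)$ discrete-to-continuum comparison for a possibly degenerate time-dependent domain $[\ell(t),u(t)]$ with $\ell,u$ continuous semimartingales, which is considerably more delicate than the one-sided case where an explicit monotone (Lipschitz) representation is available. A secondary, bookkeeping-type difficulty is to organise the joint convergence across all $N(N+1)/2$ coordinates with the correct independence structure, and to track how the level-dependent constants in the $O(1)$ comparison accumulate as one climbs from level $1$ to level $N$.
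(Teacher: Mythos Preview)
Your approach---induction on levels, representing both the discrete and the limiting dynamics as Skorokhod reflections of their driving data, Skorokhod representation to upgrade to almost-sure convergence, then continuity of the reflection map---is exactly the paper's. The two obstacles you flag are dissolved there by invoking the \emph{extended} Skorokhod map of Burdzy--Kang--Ramanan \cite{BKR} (quoted as Proposition~\ref{Sk_prop}), which is jointly continuous in $(\ell,r,\psi)$ under the sole hypothesis $\ell\le r$ with no strict separation needed, and by observing that the discrete block/push rule is not an $O(1)$ approximation but \emph{exactly} this extended map on the interval $[\mathcal X^{k-1}_{i-1}(t;n),\,\mathcal X^{k-1}_i(t;n)-1]$, so after rescaling the only discrepancy from the continuum interval is a $-1/b_n\to 0$ shift of the upper boundary.
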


\smallskip

\begin{rmk}
If one wants to speak about discrete time processes, then it suffices to extrapolate them to
continuous time processes with piecewise constant trajectories, to which one can apply Theorem
\ref{theorem_main}.
\end{rmk}

\smallskip

\begin{rmk}
Theorem \ref{theorem_main} for a special choice of driving processes $X(\cdot;n)$ was conjectured
in \cite{N}.
\end{rmk}

\begin{rmk} Theorem \ref{theorem_main} is known to hold for certain projections of the process $Y$. Namely, in the
literature one can find proofs of the convergence towards the joint distribution of $W^N_i$,
$i=1,\dots,N$, the joint distribution of $W^i_i$, $i=1,\dots,N$, the joint distribution of
$W^i_1$, $i=1,\dots,N$ and the distribution of $W$ at any \emph{fixed} moment of time.
\end{rmk}

\medskip

Using the well-known convergence of the standard Poisson process to the standard Brownian motion,
we conclude from Theorem \ref{theorem_main} that the laws of the processes $(Y(nt)-tn)/\sqrt{n}$,
$t\geq0$ converge in the limit $n\to\infty$ to the law of the process $W$, started from $0$.
Similarly, applying Theorem \ref{theorem_main} to the interlacing dynamics of $N(N+1)/2$
independent simple random walks and using the convergence of the latter to standard Brownian
motions, we prove the conjecture in \cite{N} on the convergence of the particle dynamics related
to the shuffling algorithm for the domino tilings of the Aztec diamond to the process $W$ started
from $0$.

\bigskip

The proof of Theorem \ref{theorem_main} is based on the following idea. The limit process $W$ can
be constructed inductively using Skorokhod reflection maps in time-dependent intervals recently
introduced in \cite{BKR}, applied to the independent Brownian motions driving the particles.
Moreover, the prelimit processes can be seen to be the images of the respective driving processes
$X(\cdot;n)$ under similar Skorokhod maps. Putting these facts together with the joint continuity
of the Skorokhod map in the driving path and the time-dependent boundaries, one obtains Theorem
\ref{theorem_main}.

\smallskip

There are two directions in which Theorem \ref{theorem_main} might be generalized. First, it seems
plausible that the condition on the prelimit processes to have unit steps in Theorem
\ref{theorem_main} can be weakened. There are examples of the dynamics on Gelfand-Tsetlin patterns
in the literature (for example, interlacing dynamics driven by sums of geometric random variables
as in \cite{BF},\cite{WW}, or the shuffling algorithm for boxed plane partitions as in \cite{BG},
\cite{BGR}, \cite{Be}), for which the fixed time distribution is known to converge to the
``GUE-minors process''. Therefore, it is reasonable to expect that the laws of the paths of these
discrete processes also converge to the law of $W$. One might also want to study what happens when
the different components of the driving process have different asymptotic behavior, that is, when
the sequences $\{a_n(\cdot)\}_{n\in\nn}$, $\{b_n\}_{n\in\nn}$ in Theorem \ref{theorem_main} may also
depend on the particles. Some particular results into this direction were obtained in Section 7 of \cite{BG2}.

\smallskip

In addition, Theorem \ref{theorem_main} allows to recover many properties of the process $W$ (for
example, the ones studied in \cite{W}) from the corresponding properties of the discrete prelimit
processes, which are sometimes easier to prove (see e.g. \cite{BF}). For some more details in this
direction, please see Remark \ref{propofW} below.

\smallskip

It is worth noting that there is another natural way to construct a stochastic dynamics on
$\mathbb{GT}^{(N)}$ which is based on the Robinson-Schensted-Knuth correspondence, see \cite{OC1},
\cite{OC2}. This dynamics is driven by $N$ random walks as opposed to $N(N+1)/2$ in our case.
While the evolution itself is very much different from the one considered in the present article
due to strong correlations between components, the fixed time distributions (and also some
other projections) are similar and can be made the same by an appropriate choice of the driving
processes.

\smallskip

The rest of the article is organized as follows. In Section \ref{Section_definitions} we introduce
stochastic processes studied throughout the paper, in Section \ref{Section_Skorohod} we explain
the relation of  these processes to the Skorohod reflection map, and in Section
\ref{Section_Limit} we prove Theorem \ref{theorem_main}.

{\bf Acknowledgements.} The work on this article started while the authors were staying at MSRI
and we would like to thank the organizers of the program ``Random Spatial Processes''. We are also
grateful to A.~Borodin and I.~Corwin for useful discussions. V.G.\, was partially supported by
RFBR-CNRS grants 10-01-93114 and 11-01-93105.

\section{Processes on interlacing particle configurations}
\label{Section_definitions}

In this section, we give rigorous definitions of the stochastic dynamics on interlacing particle configurations that we study. To this end, let $X(t):=\{X_i^j(t),\;j=1,\dots,N,\;i=1,\dots,j\}$, $t\geq0$ be a random process taking values in $\mathbb{Z}^{N(N+1)/2}$. We impose the following \emph{regularity} conditions on $X$:
\begin{enumerate}
\item The trajectories of $X$ are c\`{a}dl\`{a}g, that is, for every $t\geq0$, the limits $X(t-):=\lim_{s\uparrow t} X(s)$ and $X(t+):=\lim_{s\downarrow t} X(s)$ exist, and $X(t)=X(t+)$.
\item Each coordinate $X_i^j$ has unit increments, that is, for every $t\geq0$, we have $|X_i^j(t)-X_i^j(t-)|\le 1$.
\end{enumerate}
Note that the regularity conditions imply, in particular, that on any time interval $[0,T]$, the trajectory of $X$ has finitely many points of discontinuity.

\bigskip

Given a regular process $X$, we construct the \emph{interlacing dynamics} $\mathcal X(t)$, $t\ge 0$ driven by $X$ and taking values in $\mathbb{GT}^{(N)}$ as follows. The initial value $\mathcal X(0)$ can be any $\mathbb{GT}^{(N)}$-valued random variable. If $X$ is constant on some time interval $[t_1,t_2]$, then so is $\mathcal X$. If a $t\geq0$ is a point of discontinuity in $X$, then the value of $\mathcal X(t)$ depends only on
$\mathcal X(t-)$ and $X(t)-X(t-)$, and is given by the following \emph{sequential update}. First, we define $\mathcal X_1^1(t)$, then $\mathcal X_i^2(t)$, $i=1,2$, then $\mathcal X_i^3(t)$, $i=1,2,3$, etc. To start with, we set
$$
\mathcal X_1^1(t)=\mathcal X_1^1(t-)+ X_1^1(t)-X^1_1(t-);
$$
in other words, the increments of the process $\mathcal X_1^1$ coincide with those of $X_1^1$. Subsequently, once the values of $\mathcal X^j_i(t)$, $j=1,\dots,k-1$, $i=1,\dots,j$ are determined, we define $\mathcal X_i^k(t)$ for each $i=1,\dots,k$ independently by the following procedure, in which each step is performed only if the conditions of the previous steps are not satisfied.
\begin{enumerate}
\item If $i>1$ and $\mathcal X_i^k(t-)=\mathcal X_{i-1}^{k-1}(t)-1$, then we say that particle $\mathcal X_i^k$ is \emph{pushed} by $\mathcal X_{i-1}^{k-1}$ and set $\mathcal X_{i}^{k}(t)=\mathcal X_i^k(t-)+1$.
\item If $i<k$ and $\mathcal X_i^k(t-)=\mathcal X_{i}^{k-1}(t)$, then we say that particle $\mathcal X_i^k$ is \emph{pushed} by $\mathcal X_{i}^{k-1}$ and set $\mathcal X_{i}^{k}(t)=\mathcal X_i^k(t-)-1$.
\item If $X_i^k(t)-X_i^k(t-)=1$, then we check whether $i<k$ and $\mathcal X_{i}^{k-1}(t)=\mathcal X_i^k(t-)+1$. If so, then we say that particle $\mathcal X_i^k$ is \emph{blocked} by $\mathcal X_i^{k-1}$ and set $\mathcal X_i^k(t)=\mathcal X_i^k(t-)$; otherwise, we set $\mathcal X_i^k(t)=\mathcal X_i^k(t-)+1$.
\item If $X_i^k(t)-X_i^k(t-)=-1$, then we check whether $i>1$ and $\mathcal X_{i-1}^{k-1}(t)=\mathcal X_i^k(t-)$. If so, then we say that particle $\mathcal X_i^k$ is \emph{blocked} by $\mathcal X_{i-1}^{k-1}$ and set $\mathcal X_i^k(t)=\mathcal X_i^k(t-)$; otherwise, we set $\mathcal X_i^k(t)=\mathcal X_i^k(t-)-1$.
\item If $X_i^k(t)-X_i^k(t-)=0$, then we set $\mathcal X_i^k(t)=\mathcal X_i^k(t-)$.
\end{enumerate}

\smallskip

The above definition can be also adapted to discrete-time processes. If $X(t)$, $t=0,1,2,\dots$ is such a process, then we define $\bar X(t)$, $t\ge 0$ by
\begin{equation}
\bar X(t)=X(n)\quad\text{for}\quad n\le t< n+1,\quad n=0,1,2,\dots
\end{equation}
and let $\mathcal X(t)$, $t=0,1,2,\dots$ be the restriction of the interlacing dynamics driven by $\bar X$ to integer times.

\bigskip

The following proposition enlists the properties of $\mathcal X$ which are immediate from the construction.

\begin{proposition}
$\mathcal X(t)$, $t\geq0$ is a well-defined stochastic process taking values in $\mathbb{GT}^{(N)}$. The paths of the process $\mathcal X$ depend only on the increments of the driving process $X$, but not on its initial value $X(0)$. Suppose that $X$ has independent increments, that is, for any $T>0$, the process $X(t)-X(T-)$, $t\ge T$ is independent of the process $X(t)-X(0)$, $0\leq t<T$. Then $\mathcal X$ is a Markov process.
\end{proposition}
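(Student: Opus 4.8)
The plan is as follows. The statements that $\mathcal X$ has c\`{a}dl\`{a}g paths and is a genuine $D([0,\infty),\mathbb R^{N(N+1)/2})$-valued random variable, and the Markov property, become formal once one knows that the interlacing inequalities are preserved, so the heart of the matter is a deterministic combinatorial claim about a single sequential update. I would begin with a reduction: on any interval $[0,T]$ the regularity assumption gives finitely many jump times $t_1<\cdots<t_m$ of $X$, between which $X$ and hence $\mathcal X$ are constant, so $\mathcal X|_{[0,T]}$ is a c\`{a}dl\`{a}g step function which is a measurable function of $\mathcal X(0)$ and of the finite data $(t_\ell,\,X(t_\ell)-X(t_\ell-))_{\ell\le m}$; letting $T\to\infty$ this shows that $\mathcal X$ is a well-defined process whose trajectory is a measurable functional of $\mathcal X(0)$ and of the jumps of $X$. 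It then suffices to establish the deterministic claim: \emph{if $\mathcal X(t-)\in\mathbb{GT}^{(N)}$ and every coordinate of $X(t)-X(t-)$ lies in $\{-1,0,1\}$, then the sequentially updated configuration $\mathcal X(t)$ again lies in $\mathbb{GT}^{(N)}$}; applying it at $t_1,\dots,t_m$ starting from $\mathcal X(0)\in\mathbb{GT}^{(N)}$ then yields $\mathcal X(t)\in\mathbb{GT}^{(N)}$ for all $t$.

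I would prove this deterministic claim by induction on the level $k=1,\dots,N$, with the inductive hypothesis that, once levels $1,\dots,k$ have been updated, the truncated array $\bigl(\mathcal X_i^j(t)\bigr)_{j\le k}$ is a Gelfand--Tsetlin pattern of size $k$. The base case $k=1$ is immediate. For the inductive step, assume levels $1,\dots,k-1$ already form a valid pattern and perform the update of level $k$. First, each $\mathcal X_i^k(t)$ is unambiguously defined, because it is prescribed only in terms of $\mathcal X(t-)$, the freshly updated level $k-1$, and the single increment $X_i^k(t)-X_i^k(t-)$ (with no reference to the other level-$k$ particles), and exactly one of the five rules applies: rules $(1)$ and $(2)$ are explicitly guarded and cannot fire simultaneously for the same $i$ --- that would force $\mathcal X_{i-1}^{k-1}(t)=\mathcal X_i^{k-1}(t)+1$, contradicting $\mathcal X_{i-1}^{k-1}(t)<\mathcal X_{i-1}^{k-2}(t)\le\mathcal X_i^{k-1}(t)$ from the size-$(k-1)$ hypothesis when $k\ge3$, while for $k\le2$ they govern disjoint particles --- and rules $(3)$--$(5)$ exhaust the value of $X_i^k(t)-X_i^k(t-)$. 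It then remains to verify, for each $m\in\{1,\dots,k-1\}$, that $\mathcal X_m^k(t)<\mathcal X_m^{k-1}(t)\le\mathcal X_{m+1}^k(t)$. I would do this by splitting on the value of $\mathcal X_m^{k-1}(t)-\mathcal X_m^{k-1}(t-)\in\{-1,0,1\}$ and, in each case, examining which rule governed $\mathcal X_m^k$ (for the left inequality) and $\mathcal X_{m+1}^k$ (for the right one); in every case the inequality survives, using only the pre-jump interlacing $\mathcal X_m^k(t-)<\mathcal X_m^{k-1}(t-)\le\mathcal X_{m+1}^k(t-)$ together with the already-updated size-$(k-1)$ pattern, since the push-or-block clause that gets switched on is precisely the one tailored to enforce that inequality. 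For instance, if $\mathcal X_m^{k-1}$ does not move and $X_m^k$ is driven up (and rules $(1)$--$(2)$ do not intervene), then rule $(3)$ either blocks the particle (when $\mathcal X_m^k(t-)=\mathcal X_m^{k-1}(t-)-1$) or lets it step up to a position still $\le\mathcal X_m^{k-1}(t)-1$.

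The remaining two assertions are short. Every branch of the sequential update refers to the driver only through the increment $X(t)-X(t-)$ and never through $X(t)$ itself, so two drivers with the same increments and the same $\mathcal X(0)$ yield identical paths of $\mathcal X$ --- this is precisely the claimed dependence on the increments of $X$ alone. For the Markov property, running the reduction above from an arbitrary time $T$ exhibits $\mathcal X(t)$, $t\ge T$, as a fixed measurable function of $\mathcal X(T)$ and of the jumps of $X$ in $(T,\infty)$; the $\sigma$-algebra $\mathcal F_T=\sigma(\mathcal X(s):s\le T)$ is contained in the one generated by $\mathcal X(0)$ and the jumps of $X$ in $[0,T]$; and, using that $\mathcal X(0)$ is independent of the increments of $X$ (as assumed throughout, cf.\ Theorem \ref{theorem_main}) together with the independent-increments hypothesis --- at time $T$ for the jumps in $[0,T)$, and at times $T'\downarrow T$ to rule out dependence on a possible jump of $X$ at $T$ itself --- the jumps of $X$ in $(T,\infty)$ are independent of $\mathcal F_T$. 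Hence $\mathbb P\bigl(\mathcal X(t)\in A\mid\mathcal F_T\bigr)$ is a function of $\mathcal X(T)$ only, i.e.\ $\mathcal X$ is Markov.

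The one place where genuine work is hidden is the closing step of the combinatorial lemma: checking the interlacing between levels $k-1$ and $k$ requires a moderately large case analysis, the delicate bookkeeping being which of the two neighbouring level-$(k-1)$ particles is the active boundary, in which direction it has just moved, and which of the four nontrivial rules is therefore the one that prevents a violation. Everything else is routine.
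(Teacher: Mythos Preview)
The paper does not actually prove this proposition: it is introduced with the sentence ``The following proposition enlists the properties of $\mathcal X$ which are immediate from the construction,'' and no argument is supplied. Your proposal is therefore not competing with a paper proof but filling in what the authors regarded as routine.

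Your plan is correct and complete in outline. The reduction to finitely many jump times on compacts, the inductive combinatorial lemma that a single sequential update preserves membership in $\mathbb{GT}^{(N)}$, the observation that only increments of $X$ enter the update rules, and the derivation of the Markov property from the functional representation $\mathcal X|_{[T,\infty)}=F\bigl(\mathcal X(T),\text{jumps of }X\text{ in }(T,\infty)\bigr)$ are all sound. Two small remarks. First, your check that rules (1) and (2) cannot have their hypotheses hold simultaneously is not needed for \emph{well-definedness} of the update (the rules are applied in priority order, so the first applicable one wins), but it \emph{is} needed for the GT-preservation claim: if both conditions held, applying rule (1) would push $\mathcal X_i^k$ up while rule (2)'s hypothesis says it must go down to stay interlaced, and your argument correctly excludes this. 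Second, you rightly flag that the Markov statement tacitly requires $\mathcal X(0)$ to be independent of the increments of $X$; the proposition does not say this explicitly, but it is assumed in Theorem~\ref{theorem_main} and is clearly necessary. With these points noted, the case analysis you defer to the end is indeed the only nontrivial content, and your description of how it goes is accurate.
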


\noindent{\bf Examples.}
\begin{enumerate}
\item If $X$ is a family of $N(N+1)/2$ independent standard Poisson processes, then $\mathcal X$ is the Markov process $Y$ defined in the introduction.
\item If $\{\xi_i^j(t):\;j=1,\dots,N,\;i=1,\dots,j,\;t=0,1,2\}$ is an array of i.i.d. Bernoulli random variables with parameter $p$ and $X_i^j(t)=\sum_{s=0}^{t-1} \xi_i^j(t)$, then $\mathcal X$ gives the dynamics studied in \cite{N} and \cite{BF}, which is related to the shuffling
algorithm for sampling random domino tilings of the Aztec diamond.
\end{enumerate}

\medskip

Next we aim to introduce a continuous-space analogue of the above discrete processes. This process $W$ was first defined and studied in \cite{W}. It takes values in the continuum version of $\mathbb{GT}^{(N)}$, which we denote by $\mathbb{GT}^{(N)}_c$. An element of $\mathbb{GT}^{(N)}_c$ is an array of reals $x_i^j$, $j=1,\dots,N$, $i=1,\dots,j$ satisfying the inequalities
\begin{equation}
x_{i-1}^j \le x_{i-1}^{j-1} \le x_{i}^{j}.
\end{equation}

The process $W$ taking values in $\mathbb{GT}^{(N)}_c$ is defined level by level. For $j=1$, we
let $W_1^1$ be a standard Brownian motion. For $j>1$, we define the component $W_i^j$ as a
standard Brownian motion (independent of those used in the previous steps) \emph{reflected} on the
trajectories of $W_{i-1}^{j-1}$ and $W_{i}^{j-1}$. In particular, $W_j^j$ and $W_1^j$ are
reflected on only one trajectory. We can start $W$ from any deterministic or random initial
condition with the only restriction being that $W(0)$ belongs to $\mathbb{GT}^{(N)}$ with
probability $1$. For more details, please see \cite[Section 2]{W} and \cite[Section 4]{W}.

\bigskip

\noindent{\bf Properties.} As shown in \cite{W}, the process $W$ started from $0$ possesses
certain interesting properties. In particular, the projection $W^N_i$, $i=1,\dots,N$ evolves
according to the Dyson's Brownian motion of dimension $N$. Also, for any fixed $t$, the
distribution of $W(t)$ is given by the distribution of the eigenvalues of corners of an $N\times
N$ Gaussian Hermitian matrix (see \cite{JN}, \cite{OR}).

\smallskip

Another intriguing property is the following Markovity: Fix an $N\in\nn$ and $0<t_1<t_2<\dots<t_N$
and consider the process $Z$ which is defined to coincide with the $\rr^k$-valued process
$(W^k_i:\;i=1,\dots,k)$ on the time interval $[t_{k-1},t_k)$ (here, we use the convention
$t_0=0$). Then the process $Z(t)$, $0\leq t\leq t_N$ is Markovian and its transitional
probabilities can be given explicitly, see \cite[proof of Proposition 6]{W} and also
\cite[Proposition 2.5]{BF} for a similar statement in discrete settings.

\section{Skorokhod maps}

\label{Section_Skorohod}

The proof of our main result relies on the observation that the limiting process $W$ is given by
the image of the vector of driving Brownian motions, started in $W(0)$, under an appropriate Skorokhod reflection map,
whereas the prelimit processes $\mathcal X(\cdot;n)$, $n\in\nn$ are given by images of the driving
processes $X(\cdot;n)$, $n\in\nn$, started in $\mathcal X(0;n)$, $n\in\nn$, respectively,
under slightly modified Skorokhod maps. Our construction relies on \cite[Theorem 2.6]{BKR},
which we state in the following proposition for the convenience of the reader.
\begin{proposition} \label{Sk_prop}
Let $l$, $r$ be two right-continuous functions with left limits on $[0,\infty)$ taking values in
$[-\infty,\infty)$ and $(-\infty,\infty]$, respectively. Suppose further that $l(t)\leq r(t)$,
$t\geq0$. Then, for every right-continuous function $\psi$ with left limits on $[0,\infty)$ taking
values in $\rr$, there exists a unique pair of functions $\phi$, $\eta$ in the same space
satisfying
\begin{enumerate}[1.]
\item For every $t\in[0,\infty)$, $\phi(t)=\psi(t)+\eta(t)\in[l(t),r(t)]$;
\item For every $0\leq s\leq t<\infty$,
\begin{eqnarray}
&&\eta(t)-\eta(s)\geq0\quad \text{if}\;\phi(u)<r(u)\;\text{for\;all\;}u\in(s,t],\\
&&\eta(t)-\eta(s)\leq0\quad \text{if}\;\phi(u)>l(u)\;\text{for\;all\;}u\in(s,t];
\end{eqnarray}
\item For every $0\leq t<\infty$,
\begin{eqnarray}
\eta(t)-\eta(t-)\geq0\quad\text{if}\;\phi(t)<r(t),\\
\eta(t)-\eta(t-)\leq0\quad\text{if}\;\phi(t)>l(t),
\end{eqnarray}
where $\eta(0-)$ is to be interpreted as $0$.
\end{enumerate}
Moreover, the map $\Gamma$, which sends the triple $(l,r,\psi)$ to the function $\phi$ is jointly
continuous with respect to the topology of uniform convergence on compact sets.
\end{proposition}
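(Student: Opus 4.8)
The plan is to reduce the statement to the known two-sided Skorokhod reflection result of \cite{BKR} by a change of variables, and only then to worry about whether any part needs fresh work. Actually, since the proposition is stated as being quoted verbatim from \cite[Theorem 2.6]{BKR}, the honest ``proof'' is a citation; but if one wanted to give a self-contained argument, here is how I would proceed.

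First I would establish \emph{existence} of the pair $(\phi,\eta)$ by an explicit construction. When both $l$ and $r$ are finite, the reflection term can be written through the classical two-sided Skorokhod formula: set $\eta(t) = \Xi\bigl(\psi - l,\ r - l\bigr)(t)$, where $\Xi$ is the explicit functional (a nested sup/inf over the past, of the Kruk–Lehoczky–Ramanan type) that solves the reflection problem on the moving strip $[0, r(t)-l(t)]$ for the shifted path $\psi - l$. One then checks directly from the formula that the three displayed conditions hold; the c\`adl\`ag regularity of $\phi$ and $\eta$ follows from that of $\psi$, $l$, $r$ since the sup/inf operations preserve it. The degenerate cases $l \equiv -\infty$ (only an upper barrier) and $r \equiv \infty$ (only a lower barrier) reduce to the one-sided Skorokhod map, and the case $l(t) = r(t)$ for some $t$ forces $\phi(t) = l(t)$ and pins down $\eta(t)$, so these are handled separately but easily. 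The point to be careful about is that $l$ may take the value $-\infty$ and $r$ the value $+\infty$ on parts of the time axis while being finite elsewhere; there one patches the construction on the (relatively open) time set where both barriers are finite, using condition 3 at the endpoints of that set to glue consistently.

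Next I would prove \emph{uniqueness}. Suppose $(\phi_1,\eta_1)$ and $(\phi_2,\eta_2)$ both satisfy 1--3 for the same data. Consider $\Delta = \phi_1 - \phi_2 = \eta_1 - \eta_2$ and argue that $\Delta$ cannot become positive: at any time $t$ where $\Delta(t) > 0$ we have $\phi_1(t) > \phi_2(t) \ge l(t)$, so by conditions 2 and 3 the term $\eta_1$ is non-increasing near $t$ (both across jumps and on intervals) while symmetrically $\phi_2(t) < \phi_1(t) \le r(t)$ forces $\eta_2$ to be non-decreasing near $t$; tracing back to the last time $\Delta$ vanished (which exists since $\Delta(0) = 0$) yields $\Delta(t) \le 0$, a contradiction. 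By symmetry $\Delta \equiv 0$. This is the standard ``no simultaneous push'' comparison argument and is where the precise formulation of conditions 2 and 3 — in particular the strict inequalities $\phi(u) < r(u)$, $\phi(u) > l(u)$ on \emph{all} of $(s,t]$ and the jump conditions — earns its keep.

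Finally, \emph{joint continuity} of $\Gamma:(l,r,\psi)\mapsto\phi$ in the topology of uniform convergence on compacts. On time intervals where the limiting barriers stay finite and bounded away from $\pm\infty$, this follows from the explicit formula for $\eta$: the nested sup/inf functional is Lipschitz in the sup norm with respect to $\psi$, $l$, $r$ jointly, so $\phi = \psi + \eta$ is continuous there. The genuinely delicate part — and the step I expect to be the main obstacle — is continuity near times where a barrier is infinite in the limit but finite along the approximating sequence (or vice versa): one must show the corresponding reflection contribution vanishes in the limit, which requires a uniform-integrability-type control on the local time $\eta$ against a barrier that is escaping to infinity. This is handled by a compactness/tightness argument on each compact time interval together with the uniqueness just proved: any subsequential limit of $(\phi_n,\eta_n)$ satisfies 1--3 for the limiting data, hence equals the unique solution, so the whole sequence converges. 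For the full details of all three parts I refer to \cite[Section 2]{BKR}. \ep
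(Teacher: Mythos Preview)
The paper does not prove this proposition at all: it is stated explicitly as a restatement of \cite[Theorem 2.6]{BKR} ``for the convenience of the reader,'' and no argument is given. You correctly identify this at the outset, so in that sense your proposal matches the paper's treatment exactly.

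Your supplementary sketch of how the result is actually proved in \cite{BKR} is broadly accurate --- existence via an explicit nested sup/inf functional, uniqueness via the standard comparison argument on $\Delta=\phi_1-\phi_2$, and continuity from the Lipschitz property of the explicit formula together with a subsequential-limit-plus-uniqueness argument --- and the places you flag as delicate (barriers passing through $\pm\infty$, the gluing at endpoints, the jump conditions in the uniqueness step) are indeed where the work lies. One small correction: your reduction ``shift to the strip $[0,r-l]$'' is not quite how \cite{BKR} proceeds, and in fact that shift is problematic when $l$ or $r$ takes the value $\pm\infty$ on part of the time axis (the shifted width $r-l$ is then undefined or infinite there, and the patching you describe is awkward to make rigorous). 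The paper \cite{BKR} works directly with the time-dependent interval $[l,r]$ throughout. But since your bottom line is to cite \cite{BKR} for the details, this does not affect the validity of the proposal.
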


We will refer to  $\Gamma(l,r,\cdot)$ as the \textit{extended Skorokhod map} in the time-dependent
interval $[l(t),r(t)]$, $t\geq0$. It will not be important in the following, but it is worth
mentioning that the map $\Gamma$ can be given explicitly (see \cite[Theorem 2.6]{BKR}).

\subsection{Skorokhod maps for the prelimit processes}

Since the definition of $\mathcal X(\cdot;n)$ does not depend on the initial value $X(0;n)$ we may
assume that $\mathcal X(0;n)=X(0;n)$. In this subsection we show that in this case each of the
prelimit processes $\mathcal X(\cdot;n)$, $n\in\nn$ can be constructed as the image of the vector
of driving processes under a deterministic map, which we will refer to as the discrete Skorokhod
map.

\bigskip

In our construction we fix  an $n\in\nn$ and proceed by induction over the number of levels $N$.
For $N=1$, we set $\tilde{\mathcal X}(\cdot;n)=X(\cdot;n)=\mathcal X(\cdot;n)$. For $N\geq2$, we
may assume that the process $\tilde{\mathcal X}(\cdot;n)$ with $(N-1)$ levels, with initial condition
being the restriction of $\mathcal X(0;n)$ to the first $(N-1)$ levels, has already been
constructed. We fix a path of this process and will construct the corresponding paths of the
particles on level $N$.

\bigskip

We consider first an $1<i<N$ and define $\tilde{\mathcal X}^N_i(\cdot;n)$ as the image of $X^N_i(\cdot;n)$, started in $\mathcal X^N_i(0;n)$, under the extended Skorokhod map in the time-dependent interval
\begin{equation}
\label{eq_timedep_1} \big[\tilde{\mathcal X}^{N-1}_{i-1}(t;n),\tilde{\mathcal
X}^{N-1}_i(t;n)-1\big],\quad t\geq0
\end{equation}
in the sense of Proposition \ref{Sk_prop} above. Next, let $i=1$. In this case, we define $\tilde{\mathcal X}^N_1(\cdot;n)$ as the image of $X^N_1(\cdot;n)$, started in $\mathcal X^N_1(0;n)$, under the extended Skorokhod map in the time-dependent interval
\begin{equation}
\label{eq_timedep_2} \big(-\infty,\tilde{\mathcal X}^{N-1}_1(t;n)-1\big], \quad t\geq0
\end{equation}
in the sense of Proposition \ref{Sk_prop}. Similarly, for $i=N$, we define $\tilde{\mathcal X}^N_N(\cdot;n)$ as the image of $X^N_N(\cdot;n)$, started in $\mathcal X^N_N(0;n)$, under the extended Skorokhod map in the time-dependent interval
\begin{equation}
\label{eq_timedep_3} \big[\tilde{\mathcal X}^{N-1}_{N-1}(t;n),\infty\big),\quad t\geq0
\end{equation}
in the sense of Proposition \ref{Sk_prop}. This finishes the construction.

\bigskip

We remark at this point that the paths of the processes $\tilde{\mathcal X}(\cdot;n)$, $n\in\nn$ are given by images of the paths of the driving processes $X(\cdot;n)$, $n\in\nn$, started in $\mathcal X(0;n)$, $n\in\nn$, respectively, under a \textit{deterministic} map $\Phi^{SK}$ depending only on $N$ (and not on $n$):
\begin{equation}
\tilde{\mathcal X}(\cdot;n)=\Phi^{SK}(X(\cdot;n)), \quad n\in\nn.
\end{equation}
We will refer to $\Phi^{SK}$ as the \textit{discrete Skorokhod map}. Finally, for each $n\in\nn$,
we define $\tilde{\mathcal Y}(\cdot;n)$ as the process constructed from
$\frac{X(\cdot;n)-a_n(\cdot)}{b_n}$ by employing the discrete Skorokhod map in the rescaled
coordinates; that is, repeating the procedure above, but rescaling all initial conditions,
processes involved and the constant $1$ in \eqref{eq_timedep_1}, \eqref{eq_timedep_2} according to the
change of coordinates
\begin{equation}\label{rescaled}
[0,\infty)\times\zz^{N(N+1)/2}\rightarrow[0,\infty)\times\rr^{N(N+1)/2},\quad (t,x)\mapsto\Big(t,\frac{x-a_n(t)}{b_n}\Big).
\end{equation}
The following proposition shows that $\tilde{\mathcal X}(\cdot;n)$, $n\in\nn$ are in fact the processes defined in
Theorem \ref{theorem_main}, and that $\tilde{\mathcal Y}(\cdot;n)$, $n\in\nn$ coincide with the
corresponding rescaled processes.

\begin{proposition}\label{propSK_disc}
Suppose that the processes $\tilde{\mathcal X}(\cdot;n)$, $n\in\nn$ and $\mathcal X(\cdot;n)$, $n\in\nn$ are constructed by using the same driving processes $X(\cdot;n)$, $n\in\nn$. Then,
\begin{eqnarray}
&&\tilde{\mathcal X}(\cdot;n)=\mathcal X(\cdot;n),\quad n\in\nn, \label{match1}\\
&&\tilde{\mathcal Y}(\cdot;n)=\frac{\mathcal X(\cdot;n)-a_n(\cdot)}{b_n},\quad n\in\nn \label{match2}
\end{eqnarray}
with probability $1$.
\end{proposition}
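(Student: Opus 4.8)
The plan is to prove \eqref{match1} by induction on the number of levels $N$, and then deduce \eqref{match2} from \eqref{match1} together with the affine equivariance of the extended Skorokhod map $\Gamma$. Fix $n$ and suppress it from the notation; recall that we may and do assume $\mathcal X(0;n)=X(0;n)$. For $N=1$ both constructions set the level-$1$ particle equal to $X_1^1$ started at $\mathcal X_1^1(0)$, so there is nothing to prove. For the inductive step, the inductive hypothesis gives that the level-$k$ trajectories ($k\le N-1$) produced by the discrete Skorokhod construction coincide with $\mathcal X^k_i$; in particular, in \eqref{eq_timedep_1}--\eqref{eq_timedep_3} we may replace $\tilde{\mathcal X}^{N-1}_{i-1}$, $\tilde{\mathcal X}^{N-1}_i$ by $\mathcal X^{N-1}_{i-1}$, $\mathcal X^{N-1}_i$. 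It then suffices to show, for each fixed $i\in\{1,\dots,N\}$, that the pair $\phi:=\mathcal X^N_i$ and $\eta:=\mathcal X^N_i-\psi$, where $\psi$ is $X^N_i$ started at $\mathcal X^N_i(0)$ (so that $\eta(0)=0$, matching the convention $\eta(0-)=0$), is exactly the pair associated by Proposition \ref{Sk_prop} to the data $(l,r,\psi)$ with $[l,r]$ the relevant time-dependent interval (with $l\equiv-\infty$ when $i=1$ and $r\equiv+\infty$ when $i=N$). Since all trajectories are c\`adl\`ag with finitely many jumps on compacts, $\phi$, $\eta$, $l$, $r$ lie in the required function space and $l\le r$ by interlacing at level $N-1$, so Proposition \ref{Sk_prop} applies; by its uniqueness part, verifying properties 1--3 for $(\phi,\eta)$ forces $\tilde{\mathcal X}^N_i=\Gamma(l,r,\psi)=\phi=\mathcal X^N_i$.

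To verify properties 1--3 I would argue as follows. Property 1, $\phi(t)\in[l(t),r(t)]$, is precisely the assertion that $\mathcal X(t)\in\mathbb{GT}^{(N)}$ for all $t$ (using the interlacing inequalities relating levels $N$ and $N-1$, together with their one-sided versions for $i=1$ and $i=N$), which is the content of the proposition stated right after the definition of $\mathcal X$. The point that makes properties 2 and 3 tractable is that $\eta$ is piecewise constant, so that $\eta(t)-\eta(s)=\sum_{s<u\le t}\big(\eta(u)-\eta(u-)\big)$; hence property 2 follows automatically once each jump $\eta(u)-\eta(u-)$ has the sign prescribed by property 3. Thus the entire argument reduces to checking property 3 at the finitely many jump times $t$.

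This last check is a finite case analysis over the five steps of the sequential update. Write $\delta:=X^N_i(t)-X^N_i(t-)\in\{-1,0,1\}$ and $\Delta:=\mathcal X^N_i(t)-\mathcal X^N_i(t-)$, so that $\eta(t)-\eta(t-)=\Delta-\delta$. A push by $\mathcal X^{N-1}_{i-1}$ (step 1) forces $\phi(t-)=l(t)-1$, hence $\Delta=+1$ and $\phi(t)=l(t)$, so the second inequality of property 3 is vacuous and the first holds since $\Delta-\delta=1-\delta\ge0$; a push by $\mathcal X^{N-1}_i$ (step 2) forces $\phi(t-)=r(t)+1$, hence $\Delta=-1$, $\phi(t)=r(t)$ and $\Delta-\delta=-1-\delta\le0$ (note steps 1 and 2 cannot both apply since $l(t)\le r(t)$); a blocked up-step (step 3) gives $\Delta=0$, $\phi(t)=r(t)$ and $\Delta-\delta=-1\le0$; a blocked down-step (step 4) gives $\Delta=0$, $\phi(t)=l(t)$ and $\Delta-\delta=+1\ge0$; and in every remaining case $\Delta=\delta$, so $\eta$ does not jump and property 3 is trivial. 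This establishes the inductive step, hence \eqref{match1}. Finally, \eqref{match2} requires no new work: $\Gamma$ is equivariant under the affine change of coordinates $x\mapsto (x-c(t))/b$ with $b>0$ and $c$ c\`adl\`ag, in the sense that $\Gamma\big(\tfrac{l-c}{b},\tfrac{r-c}{b},\tfrac{\psi-c}{b}\big)=\tfrac1b\big(\Gamma(l,r,\psi)-c\big)$ --- immediate from properties 1--3, since $\eta$ is unchanged and all the inequalities and membership conditions are preserved. Since $\tilde{\mathcal Y}(\cdot;n)$ is built by the same recipe as $\tilde{\mathcal X}(\cdot;n)$ with every ingredient transported through \eqref{rescaled}, an induction on $N$ of exactly the same shape (now invoking equivariance rather than the case check) yields $\tilde{\mathcal Y}(\cdot;n)=\big(\tilde{\mathcal X}(\cdot;n)-a_n(\cdot)\big)/b_n$, which combined with \eqref{match1} is \eqref{match2}.

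The main obstacle is not a conceptual one but the bookkeeping in the case analysis for property 3: one must make sure that the ``$-1$'' shifts in the interval endpoints \eqref{eq_timedep_1}--\eqref{eq_timedep_2} are chosen exactly so that the discrete push/block rules translate into the jump behavior of the \emph{extended} Skorokhod map, the ``extended'' version from \cite{BKR} being essential precisely because the boundaries $\mathcal X^{N-1}_{i-1}$ and $\mathcal X^{N-1}_i$ are themselves jump processes (so that a particle can be pushed by a discontinuity of its boundary rather than merely reflected off a continuous one). A secondary point worth flagging is that property 2 should not be verified separately but recognized as a consequence of property 3 in this piecewise-constant regime.
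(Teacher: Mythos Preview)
Your proof is correct and follows the same inductive skeleton as the paper's: induct on the number of levels and, at level $N$, resolve the matter by a case analysis over the push/block rules at jump times. The difference is in how the level-$N$ step is executed. The paper argues by direct trajectory matching in time: it observes that $\mathcal X^N_i$ and $\tilde{\mathcal X}^N_i$ agree between jumps, and then checks, for each push/block event, that the extended Skorokhod map produces the same post-jump value as the sequential update. You instead verify that $(\phi,\eta)=(\mathcal X^N_i,\mathcal X^N_i-\psi)$ satisfies properties~1--3 of Proposition~\ref{Sk_prop} and invoke uniqueness of $\Gamma$ to conclude $\tilde{\mathcal X}^N_i=\mathcal X^N_i$. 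Your route is slightly cleaner: the observation that property~2 follows from property~3 in the piecewise-constant regime reduces the whole step to a transparent five-line table, and it makes explicit exactly which feature of the Skorokhod construction (uniqueness) is doing the work. The paper's route, on the other hand, is marginally more self-contained in that it does not need to appeal to the uniqueness clause at all. For \eqref{match2} the paper simply says ``repeat in rescaled coordinates''; your formulation via affine equivariance of $\Gamma$ is a tidy way to make that precise and avoids redoing the case analysis.
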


\begin{proof}
Fix an $n\in\nn$. We start with the proof of \eqref{match1} for that $n$ and proceed by induction
over the number of levels $N$. For $N=1$, the statement \eqref{match1} follows directly from the
definition of $\tilde{\mathcal X}(\cdot;n)$. Now, let $N\geq2$ and suppose that \eqref{match1}
holds for all $N'<N$. Then, the paths of the particles on the first $(N-1)$ levels of
$\tilde{\mathcal X}(\cdot;n)$ must coincide with the paths of the particles on the first $(N-1)$
levels of $\mathcal X(\cdot;n)$. Now, consider the trajectories of the particles on the $N$-th
level in the two processes. If the trajectories coincide up to some time $t_1$, then they clearly
coincide up to time $t_2>t_1$ as long as no particles are pushed or blocked in $\mathcal
X(\cdot;n)$ in the time interval $[t_1,t_2]$.

\bigskip

In case that a particle on level $N$ in $\mathcal X(\cdot;n)$ is pushed, the trajectory of the
pushing particle must have a jump of size $1$ at the moment of the push and the other particle
is pushed accordingly. The definitions of $\Phi^{SK}$ and of the extended Skorokhod map in a time-dependent
interval incorporated in $\Phi^{SK}$ show that the value of $\tilde{\mathcal X}(\cdot;n)=\Phi^{SK}(X(\cdot;n))$ after the
push must coincide with the value of $\mathcal X(\cdot;n)$ after the push.

\bigskip

Similarly, when a particle on level $N$ in $\mathcal X(\cdot;n)$ is blocked, the blocking particle
must be at distance $1$ from the blocked particle at the moment, when it blocks the other particle,
and the trajectory of the component in $X(\cdot;n)$ driving the blocked particle must have a jump
of size $1$ at the same moment. By the definitions of $\Phi^{SK}$ and the extended Skorokhod map in a time-dependent
interval, the value of $\tilde{\mathcal X}(\cdot;n)=\Phi^{SK}(X(\cdot;n))$ after a particle is
blocked must coincide with the value of $\mathcal X(\cdot;n)$ at the same moment of time.

\bigskip

The statement \eqref{match2} can be shown by arguing as in the proof of the statement
\eqref{match1}, but working in the coordinates obtained by the rescaling \eqref{rescaled}.
\end{proof}

\subsection{Skorokhod map for the limiting process}

In this subsection we will employ Proposition \ref{Sk_prop} to show that the process $W$ can be viewed as the image of the vector of driving Brownian motions, started in $W(0)$, under a deterministic map, which we will refer to as the \textit{continuum Skorokhod map}. To this end, we let $B:=\{B^j_i,\,j=1,\dots,N,\,i=1,\dots,j\}$ be the collection of independent standard Brownian motions driving the process $W$ and will construct an auxilliary $\mathbb{GT}^{(N)}_c$-valued process $\tilde{W}$ on the same probability space, which \textit{a posteriori} will turn out to coincide with $W$.

\bigskip

The construction proceeds by induction over the number of levels $N$. For $N=1$, we set $\tilde{W}=B^1_1=W$. For $N\geq2$, we may assume that the paths of the particles on the first $(N-1)$ levels of the process $\tilde{W}$, with initial condition being the restriction of $W(0)$ to the first $(N-1)$ levels, have already been constructed. Then, for $1<i<N$, we define the path of the particle $\tilde{W}^N_i$ as the image of the driving Brownian motion $B^N_i$, started in $W^N_i(0)$, under the extended Skorokhod map in the time-dependent interval
\begin{equation}
\big[\tilde{W}^{N-1}_{i-1}(t),\tilde{W}^{N-1}_i(t)\big],\quad t\geq0
\end{equation}
in the sense of Proposition \ref{Sk_prop}. For $i=1$, we define the path of the particle $\tilde{W}^N_1$ as the image of the driving Brownian motion $B^N_1$, started in $W^N_1(0)$, under the extended Skorokhod map in the time-dependent interval
\begin{equation}
\big(-\infty,\tilde{W}^{N-1}_1(t)\big],\quad t\geq0
\end{equation}
defined as in Proposition \ref{Sk_prop}. Finally, for $i=N$, we define the path of the particle $\tilde{W}^N_N$ as the image of the driving Brownian motion $B^N_N$, started in $W^N_N(0)$, under the extended Skorokhod map in the time-dependent interval
\begin{equation}
\big[\tilde{W}^{N-1}_{N-1}(t),\infty\big),\quad t\geq0
\end{equation}
in the sense of Proposition \ref{Sk_prop}.

\bigskip

We note that the process $\tilde{W}$ is given by the image of the Brownian motion $B$, started in $W(0)$, under a \textit{deterministic} map $\Psi^{SK}$:
\begin{equation}
\tilde{W}=\Psi^{SK}(B).
\end{equation}
We will refer to $\Psi^{SK}$ as the \textit{continuum Skorokhod map}. The following proposition
shows that $\tilde{W}$ coincides with $W$.

\begin{proposition}\label{SKprop_cont}
Let the processes $\tilde{W}$ and $W$ be defined on the same probability space and be driven by the same collection $B$ of independent standard Brownian motions. Then, with probability $1$:
\begin{equation}
\tilde{W}(t)=W(t)\quad\text{for\;all}\quad t\geq0.
\end{equation}
\end{proposition}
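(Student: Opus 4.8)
The plan is to prove the claim by induction over the number of levels $N$, exploiting the fact that both $\tilde W$ and $W$ are built level by level from the \emph{same} Brownian motions $B^j_i$. For $N=1$ there is nothing to prove, since $\tilde W_1^1 = B_1^1 = W_1^1$ by definition. For the inductive step I would assume that the restrictions of $\tilde W$ and $W$ to the first $N-1$ levels agree almost surely, and then fix a realization of the Brownian motions on which this holds and on which the paths have the usual regularity (continuity, no fixed-time pathologies). It then suffices to show that for each $i=1,\dots,N$ the path $\tilde W^N_i$, defined as the extended Skorokhod reflection of $B^N_i$ in the (now known) time-dependent interval with boundaries $\tilde W^{N-1}_{i-1}$ and $\tilde W^{N-1}_i$ (with the obvious conventions at $i=1$ and $i=N$), coincides with Warren's $W^N_i$, which is the Brownian motion $B^N_i$ reflected off the trajectories $W^{N-1}_{i-1}$ and $W^{N-1}_i$.

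The heart of the matter is therefore a one-dimensional statement: the \emph{extended Skorokhod map} $\Gamma(l,r,\cdot)$ of Proposition~\ref{Sk_prop}, applied to a continuous path $\psi=B^N_i$ with continuous boundaries $l=\tilde W^{N-1}_{i-1}$, $r=\tilde W^{N-1}_i$, produces exactly the process obtained by doubly reflecting $\psi$ between $l$ and $r$ in Warren's sense. I would verify this by checking that Warren's construction supplies a pair $(\phi,\eta)$ — with $\phi=W^N_i$ and $\eta$ the difference of the two local-time-type pushing terms (the cumulative push up from the lower boundary minus the cumulative push down from the upper boundary) — that satisfies properties 1, 2 and 3 in Proposition~\ref{Sk_prop}; uniqueness in that proposition then forces $\phi=\Gamma(l,r,\psi)=\tilde W^N_i$. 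Property~1 is the constraint $W^{N-1}_{i-1}\le W^N_i\le W^{N-1}_i$, which is precisely the interlacing that Warren's reflected construction enforces and which the induction hypothesis lets us phrase in terms of $\tilde W^{N-1}$. Properties~2 and~3 (monotonicity of $\eta$ on intervals where $\phi$ stays strictly inside the boundary, and the one-sided jump conditions) follow from the defining property of the reflection terms in \cite{W}: the push from a given boundary is nondecreasing and increases only when $\phi$ touches that boundary, and since the paths here are continuous there are no jumps to worry about in property~3. One should also note that when one boundary is $\pm\infty$ (the cases $i=1$, $i=N$) the corresponding reflection term vanishes and the argument degenerates to the classical one-sided Skorokhod reflection, which is the content of \cite{W} on the outer edges.

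Carrying this out cleanly, I would first isolate as a lemma the statement "for continuous data, $\Gamma(l,r,\cdot)$ coincides with double Skorokhod reflection between $l$ and $r$," proving it exactly as above via the uniqueness clause of Proposition~\ref{Sk_prop}; this lemma does all the real work and makes the induction a formality. Then the inductive step is: apply the lemma with $l=\tilde W^{N-1}_{i-1}=W^{N-1}_{i-1}$ and $r=\tilde W^{N-1}_i=W^{N-1}_i$ (equalities from the induction hypothesis) and $\psi=B^N_i$, conclude $\tilde W^N_i=W^N_i$ for every $i$, hence $\tilde W$ and $W$ agree on the first $N$ levels. Since the reflection boundaries for different $i$ on level $N$ involve disjoint lower/upper neighbors and all $B^N_i$ are independent, there is no interaction to manage across $i$ — the level-$N$ particles are defined independently given level $N-1$ in both constructions.

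The main obstacle is the lemma, specifically matching the bookkeeping of Warren's reflection terms with the sign conventions of $\eta$ in Proposition~\ref{Sk_prop}: one must be careful that $\eta$ is a single signed function (the net push), that its increments are nonnegative precisely when $\phi$ is away from the \emph{upper} boundary and nonpositive precisely when $\phi$ is away from the \emph{lower} boundary, and that Warren's two nonnegative pushing processes really do combine into such an $\eta$ — in particular that they never act simultaneously except in the measure-zero way permitted by the interval being a single point, which does not occur here since $W^{N-1}_{i-1}<W^{N-1}_i$ strictly for $t>0$ away from coincidence points. Once the correspondence of conventions is pinned down, uniqueness in Proposition~\ref{Sk_prop} closes the argument with no further analysis.
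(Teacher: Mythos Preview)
Your proposal is correct and follows essentially the same route as the paper: induction on $N$, with the inductive step reduced to the uniqueness clause of Proposition~\ref{Sk_prop} applied to $B^N_i$ with boundaries $W^{N-1}_{i-1}=\tilde W^{N-1}_{i-1}$ and $W^{N-1}_i=\tilde W^{N-1}_i$. The paper's proof is terser---it simply asserts that both $\tilde W^N_i$ and $W^N_i$ solve the extended Skorokhod problem ``by their respective constructions'' and invokes uniqueness---whereas you spell out the verification that Warren's pushing terms combine into a valid $\eta$; this extra care is reasonable but not a different argument.
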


\begin{proof}
We proceed by induction over the number of levels $N$. For $N=1$, the statement of the proposition
follows directly from the definition of $\tilde{W}$. Now, consider an $N\geq2$ and assume that the
proposition holds for all $N'<N$. Since the paths of the particles on the first $(N-1)$ levels in
$\tilde{W}$ and $W$ coincide with the paths of the particles in the respective processes with
$(N-1)$ levels, we have $\tilde{W}^j_i=W^j_i$ for all $j=1,\dots,N-1$, $i=1,\dots,j$ with
probability $1$.

\bigskip

Now, fix an $1<i<N$ and consider the particle $\tilde{W}^N_i$ on the $N$-th level in $\tilde{W}$ and
the corresponding particle $W^N_i$ in $W$. Since both $\tilde{W}^N_i$ and $W^N_i$ solve the extended
Skorokhod problem in the time-dependent interval
\begin{equation}
\big[\tilde{W}^{N-1}_{i-1}(t),\tilde{W}^{N-1}_i(t)\big]=\big[W^{N-1}_{i-1}(t),W^{N-1}_i(t)\big],\quad t\geq0
\end{equation}
for the Brownian motion $B^N_i$, started in $W^N_i(0)$, by their respective constructions, the uniqueness statement of
Proposition \ref{Sk_prop} implies that it must hold $\tilde{W}^N_i=W^N_i$ with probability $1$. In
the cases that $i=1$ or $i=N$, one can argue in the same manner, invoking the uniqueness statement
of  Proposition \ref{Sk_prop} for the extended Skorokhod maps in the time-dependent intervals
\begin{eqnarray}
&&\big(-\infty,\tilde{W}^{N-1}_1(t)\big]=\big(-\infty,W^{N-1}_1(t)\big],\quad t\geq0,\\
&&\big[\tilde{W}^{N-1}_{N-1}(t),\infty\big)=\big[W^{N-1}_{N-1}(t),\infty\big),\quad t\geq0,
\end{eqnarray}
respectively, to conclude that $\tilde{W}^N_1=W^N_1$ and $\tilde{W}^N_N=W^N_N$ almost surely. This finishes the proof of the proposition.
\end{proof}

\begin{rmk}
Although this will not play a role in the following, we remark that due to Proposition
\ref{SKprop_cont} and the findings in \cite{W} the particles on the same level in $\tilde{W}$
never collide after time $0$. Thus, if at time $0$ no two of these particles are at the same
location, \cite[Corollary 2.4]{BKR} shows that the extended Skorokhod maps used in the
construction of $\tilde{W}$ are in fact regular Skorokhod maps in the sense of \cite[Definition
2.1]{BKR}.
\end{rmk}

\section{Limiting procedure}

\label{Section_Limit}

We now come to the proof of the main result of the paper, stating that the process $W$ serves as a universal limiting object for discrete processes with block/push interactions.

\bigskip

\noindent\textit{Proof of Theorem \ref{theorem_main}.} We proceed by induction over the number of levels $N$. For $N=1$, there is nothing to show. We now let $N\geq2$ be fixed and assume that the theorem holds for all $N'<N$. Since the first $(N-1)$ levels of the processes $\frac{\mathcal X(\cdot;n)-a_n(\cdot)}{b_n}$, $n\in\nn$ evolve as the corresponding processes with $(N-1)$ levels, we conclude that their laws converge to the law of $\{W^j_i:\;j=1,\dots,N-1,\;i=1,\dots,j\}$ on $D([0,\infty),\rr^{(N-1)N/2})$, started according to the restriction of $W(0)$ to the first $(N-1)$ levels. By the Skorokhod Embedding Theorem in the form of \cite[Theorem 3.5.1]{Du}, one can define the processes on the first $(N-1)$ levels of $\mathcal X(\cdot;n)$ together with the processes $X^N_i(\cdot;n)$, $i=1,\dots,N$, started in $\mathcal X^N_i(0;n)$, $i=1,\dots,N$, respectively, for all values of $n\in\nn$ on the same probability space such that
\begin{eqnarray}
&&\frac{\mathcal X^j_i(\cdot;n)-a_n(\cdot)}{b_n}\xrightarrow{\smash{n\rightarrow\infty}} W^j_i,\quad j=1,\dots,N-1,\;i=1,\dots,j,\\
&&\frac{X^N_i(\cdot;N)-a_n(\cdot)}{b_n} \xrightarrow{\smash{n\rightarrow\infty}} B^N_i,\quad i=1,\dots,N
\end{eqnarray}
almost surely, where $\{W^j_i:\,j=1,\dots,N-1,\,i=1,\dots,j\}$ is a $\mathbb{GT}^{(N-1)}_c$-valued process of the law described in Theorem \ref{theorem_main} and $(B^N_1,\dots,B^N_N)$ is an $N$-dimensional standard Brownian motion started in $(W^N_1(0),\dots,W^N_N(0))$.

\bigskip

Now, recall from Proposition \ref{propSK_disc} and the definition of the processes $\tilde{\mathcal Y}(\cdot;n)$, $n\in\nn$ that the paths of the processes $\frac{\mathcal X^N_i(\cdot;n)-a_n(\cdot)}{b_n}$, $i=1,\dots,N$, $n\in\nn$ are given by the images of the driving processes $\frac{X^N_i(\cdot;n)-a_n(\cdot)}{b_n}$, $i=1,\dots,N$, $n\in\nn$ under the extended Skorokhod maps in the appropriate time-dependent intervals (we will refer to the latter as $I^N_i(n)$). Moreover, from Proposition \ref{SKprop_cont} and the definition of the process $\tilde{W}$ we know that the paths of the processes $W^N_i$, $i=1,\dots,N$ are given by the images of the driving Brownian motions $B^N_1,\dots,B^N_N$ under the extended Skorokhod maps in the appropriate time-dependent intervals (we will refer to the latter as $I^N_i(\infty)$).

\bigskip

It now suffices to observe that, for all $1\leq i\leq N$, the left (resp. right) boundaries of the time-dependent intervals $I^N_i(n)$ converge in the limit as $n\to\infty$ to the left (resp. right) boundaries of the time-dependent intervals $I^N_i(\infty)$ in $D([0,\infty),\rr)$ with probability $1$. We remark that here the assumption $b_n\to\infty$ as $n\to\infty$ is used. By applying the continuity statement in Proposition \ref{Sk_prop} we conclude that the paths of the processes $\frac{\mathcal X^N_i(\cdot;n)-a_n(\cdot)}{b_n}$, $i=1,\dots,N$ converge in the limit as $n\to\infty$ to the paths of the processes $W^N_i$, $i=1,\dots,N$ almost surely in $D([0,\infty),\rr)$. This finishes the proof. \ep

\bigskip

\begin{rmk}
The proof of Theorem \ref{theorem_main}  shows that the result of the theorem is universal in the
following sense. Suppose that $X(\cdot;n)$, $n\in\nn$ are such that the processes
$\frac{X(\cdot;n)-a_n(\cdot)}{b_n}$, $n\in\nn$ converge in law to an arbitrary
$D([0,\infty),\rr^{N(N+1)/2})$-valued process $B$ as $n\to\infty$, where $\{a_n(\cdot)\}_{n\in\nn}$ is
a sequence of continuous real-valued functions on $[0,\infty)$ and
$\{b_n\}_{n\in\nn}$ is a sequence of positive reals such that $b_n\to\infty$ as $n\to\infty$. Then, the
laws of the processes $\frac{\mathcal X(\cdot;n)-a_n(\cdot)}{b_n}$, $n\in\nn$ converge to the law of the
image of $B$ under the continuum Skorokhod map $\Psi^{SK}$ in $D([0,\infty),\rr^{N(N+1)/2})$ as
$n\to\infty$.
\end{rmk}

\smallskip

\begin{rmk}\label{propofW}
Theorem \ref{theorem_main} shows that one can reprove the properties of the process $W$ and its
projections stated at the end of Section \ref{Section_definitions} by using the corresponding
properties of the process $Y$ and taking limits. Indeed, one can reexpress the transition
probabilities of the process $W$ as limits of the corresponding transition probabilities of the
appropriately rescaled versions of the process $Y$.
\end{rmk}

\end{document}